    \newcommand{\Rmn}[1]{\it{\expandafter\@slowromancap\romannumeral #1@}}
\newcommand{\R}{\mathbb{R}}
\newcommand{\Z}{\mathbb{Z}}
\newcommand{\B}{\mathcal{B}}
\newenvironment{customthm}[1]
  {\innercustomthm}
{\endinnercustomthm}
\title{Convergence of the Least Squares Shadowing Method for
Computing Derivative of Ergodic Averages}
\author{
Qiqi Wang \thanks{Department of Aeronautics and Astronautics,
MIT, 77 Mass Ave, Cambridge, MA 02139, USA}
}
\begin{document}
\maketitle
\begin{abstract}
For a parameterized hyperbolic system $u_{i+1} = f(u_i,s)$,
the derivative of an ergodic average
$\langle J\rangle = \underset{n\rightarrow\infty}{\lim} \frac1n \sum_1^n
J(u_i,s)$ to the parameter $s$ can be computed via the least squares
sensitivity method.  This method solves a constrained least squares
problem and computes an approximation to the desired derivative
$\frac{d\langle J\rangle}{ds}$ from the solution.
This paper proves that as the size of the least squares problem
approaches infinity, the computed approximation converges to the true
derivative.
\end{abstract}

\begin{keywords}
    Sensitivity analysis, linear response, least squares shadowing,
    hyperbolic attractor, chaos, statistical average, ergodicity
\end{keywords}

\begin{AMS}
\end{AMS}

\pagestyle{myheadings}
\thispagestyle{plain}
\markboth{Q. WANG}
{LEAST SQUARES SENSITIVITY}

\section{Introduction}
Consider a family of $C^1$ bijection maps $f(u,s):\R^m\times
\R\rightarrow\R^m$ parameterized by $s\in\R$.  We are also given a
$C^1$ function $J(u,s):\R^m\times\R\rightarrow\R$.
We assume that the system is \emph{ergodic}, i.e., the infinite time average
\begin{equation}
\langle J\rangle = \lim_{n\rightarrow\infty}
\frac1n \sum_{i=1}^n J(u_i,s)\;,
\quad\mbox{where}\quad u_{i+1} = f(u_i, s),\; i=1,\ldots
\end{equation}
depends on $s$ but does not depend on the initial state $u_0$.
The \emph{least squares shadowing} method attempts to compute its
derivative via
\begin{customthm}{LSS} \label{thm:lssmap}
Under ergodicity and hyperbolicity assumptions (details in Section
\ref{s:meanlss}),
\begin{equation} \label{lssmap}
\frac{d\langle J\rangle}{ds} = \lim_{n\rightarrow\infty}
\frac1n \sum_{i=1}^n (DJ(u_i,s))\, v_i^{\{n\}} + (\partial_sJ(u_i,s))\;,
\end{equation}
where $v_i^{\{n\}}\in\R^m, i=1,\ldots,n$ is the solution to the constrained
least squares problem
\begin{equation} \label{lss}
\min \frac12\sum_{i=1}^n v_i^{\{n\}T} v_i^{\{n\}}\quad\mbox{s.t.}\quad
v_{i+1}^{\{n\}} = (Df(u_i,s))\, v_i^{\{n\}} + (\partial_sf(u_i,s))\;,
\end{equation}
$i=1,\ldots,n-1$.
\end{customthm}
Here the linearized operators are defined as
\begin{equation}\begin{split}
(DJ(u,s))\, v &:= (D_vJ)(u,s) := \lim_{\epsilon\rightarrow 0}
\frac{J(u+\epsilon v,s) - J(u,s)}{\epsilon}\\
(Df(u,s))\, v &:= (D_vf)(u,s) := \lim_{\epsilon\rightarrow 0}
\frac{f(u+\epsilon v,s) - f(u,s)}{\epsilon}\\
(\partial_sJ(u,s)) &:= \lim_{\epsilon\rightarrow 0}
\frac{J(u,s+\epsilon) - J(u,s)}{\epsilon} \\
(\partial_sf(u,s)) &:= \lim_{\epsilon\rightarrow 0}
\frac{f(u,s+\epsilon) - f(u,s)}{\epsilon} \\
\end{split}\end{equation}
$(DJ), (\partial_sJ), (Df)$ and $(\partial_sf)$
are a $1\times m$ matrix, a scalar, an $m\times m$ matrix and an
$m\times 1$ matrix, respectively, representing the partial derivatives.

Computation of the derivative $d\langle J\rangle/ds$ represents a class
of important problems in computational science and engineering.
Many applications involve simulation of nonlinear
dynamical systems that exhibit chaos.  Examples include weather and
climate, turbulent combustion, nuclear reactor physics, plasma dynamics
in fusion, and multi-body problems in molecular dynamics.
The quantities that are to be predicted (the so-called quantities of
interest) are often time averages or expected values
$\langle J\rangle$.  Derivatives of these quantities of
interests to parameters are required in applications including
\begin{itemize}
\item {\bf Numerical optimization}.  The derivative of the objective
function $\langle J\rangle$ with respect to the design, parameterized by
$s$, is used by gradient-based algorithm to efficiently optimize in high
dimensional design spaces.
\item {\bf Uncertainty quantification}.  The derivative of the
quantities $\langle J\rangle$ with respect to the sources of
uncertainties $s$ can be used to assess the error and uncertainty
in the computed $\langle J\rangle$.
\end{itemize}

A scientific example is when the dynamical system is a climate model, and
the ergodic average $\langle J\rangle$ is the long time averaged
global mean temperature.  Its derivative to the amount of anthropogenic
emissions would be a valuable quantity to study.  An engineering example
can be found in simulation of turbulent air flow over an aircraft, where
the ergodic average $\langle J\rangle$ is the long time averaged drag.
Its derivative to shape parameters of the aircraft can help engineers
increase the efficiency of their design.  Although it is difficult to
analyze theoretically whether these complex dynamical systems are ergodic,
many of them have been observed to have ergodic quantities of interest,
leading to the popular \emph{chaotic
hypothesis}\cite{ruelle1980measures,gallavotti1995dynamical,evans2008statistical,gallavotti2006entropy}.
Efficient computation of the derivative of long time averaged quantities
in these systems is an important and challenging problem.

Traditional transient sensitivity analysis methods fail to compute
$d\langle J\rangle/ds$ in chaotic systems.  These methods focus on linearizing
initial value problems to obtain the derivative of the quantities of
interest.  When the quantity of interest is a
long-time average in a chaotic system, the derivative of this average
does not equal the long time average of the derivative.
As a result, traditional adjoint methods fail,
and the root of this failure is the
ill-conditioning of initial value problems of chaotic systems \cite{leaclimate}.

The differentiability of $\langle J\rangle$
has been shown by Ruelle \cite{springerlink10}.  Ruelle also constructed
a formula of the derivative.  However, Ruelle's formula is difficult to
compute numerically \cite{leaclimate, eyinkclimate}.  Abramov and Majda
are successful in computing the derivative based on the fluctuation
dissipation theorem \cite{0951-7715-20-12-004}.  However, for systems
whose SRB measure \cite{youngSRB} deviates strongly from Gaussian, 
fluctuation dissipation theorem based methods can be inaccurate.
Recent work by Cooper and Haynes has alleviated this limitation by
using a nonparametric method for estimating the stationary probability
density function \cite{cooper2011climate}.
Several more recent methods have been developed for computing this
derivative \cite{QJ:QJ200513160505, wangLorenz, bloniganwang, wanglcoadj}.
In particular,
the \emph{least squares shadowing} method \cite{wanglcoadj} is a
method that computes the derivative of $\langle J\rangle$
efficiently by solving a constrained least squares problem.
The primary advantage of this method is its simplicity.  The least
squares problem can be easily formulated and efficiently solved as
a linear system.  Compared to other methods, it is insensitive to the
dimension of the dynamical system and requires no knowledge of the
equilibrium probability distribution in the phase space.

This paper provides theoretical foundation for the least squares
sensitivity method by proving Theorem (\ref{thm:lssmap}) for uniformly
hyperbolic maps.  Section \ref{s:hyperbolic} lays out the basic assumptions,
and introduces hyperbolicity for readers who are not familiar with this
concept.  Section \ref{s:stability} then proves a special version of the
classic structural stability result, and defines the
shadowing direction, a key concept used in our proof.
Section \ref{s:meanshadow} demonstrates that the derivative of $\langle
J\rangle$ can be computed through the shadowing direction.
Section \ref{s:lssshadow} then shows that the least squares shadowing
method is an approximation of the shadowing direction.  We consider this
as a mathematically new and nontrivial result.
Section \ref{s:meanlss} finally proves Theorem \ref{thm:lssmap} by
showing that the approximation of the shadowing direction makes a
vanishing error in the computed derivative of $\langle J\rangle$.

\section{Uniform hyperbolicity}
\label{s:hyperbolic}

In this section we consider a dynamical system governed by
\begin{equation} \label{map}
u_{i+1} = f(u_i,s)
\end{equation}
with a parameter $s\in \R$, where $u_i\in \R^m$ and
$f:\R^m\times \R \rightarrow\R^m$ is $C^1$ and bijective in $u$.
This paper studies perturbation of $s$ around a nominal value.  Without
loss of generality, we assume the nominal value of $s$ to be 0.  We denote
$f^{(0)}(u,s) \equiv u$ and $f^{(i+1)}(u,s) \equiv f^{(i)}(f(u,s),s)$
for all $i\in \Z$.

We assume that the map has a
\emph{compact, global, uniformly hyperbolic attractor}
$\Lambda\subset\R^m$ at $s=0$, satisfying
\footnote{
A necessary condition for the applicability of our method is that the
dynamical system settles down to an attractor after many iterations.
The attractor can be a fixed point, a limit cycle, or a strange 
attractor.  Empirically, this means that the system eventually reaches an
equilibrium or quasi-equilibrium.
}
\begin{enumerate}
\item For all $u_0\in\R^m$,
$dist(\Lambda, f^{(n)}(u_0,0)) \xrightarrow{n\rightarrow\infty} 0$
where $dist$ is the Euclidean distance in $\R^m$.
\item There is a $C\in(0,\infty)$ and $\lambda\in(0,1)$, such that
for all $u\in\Lambda$, there is a splitting of $\R^m$
representing the space of perturbations around $u$.
\begin{equation} \label{hypermap}
\R^m = V^+(u) \oplus V^-(u)\;,
\end{equation}
where the subspaces are
\begin{itemize}
\item$V^+(u) :=
\{ v\in\R^m :
\|(Df^{(i)}(u,0))\, v\| \le C\,\lambda^{-i}\,\|v\|\;, \forall i<0 \}$
is the \emph{unstable subspace} at $u$, where $\|\cdot\|$ is the
Euclidean norm in $\R^m$, and
\[\begin{aligned}
(Df^{(i)}(u,s))\, v :=& \lim_{\epsilon\rightarrow 0}
\frac{f^{(i)}(u+\epsilon v,s) - f^{(i)}(u,s)}{\epsilon}\\
=& (Df^{(i-1)}(f(u,s),s))\; (Df(u,s))\; v
\end{aligned}\]
\item$V^-(u) :=
\{ v\in\R^m :
\|(Df^{(i)}(u,0))\, v\| \le C\,\lambda^{i}\,\|v\|\;, \forall i>0 \}$
is the \emph{stable subspace} at $u$.
\end{itemize}
Both $V^+(u)$ and $V^-(u)$ are continuous with respect to $u$.
\end{enumerate}

It can be shown that the subspaces $V^+(u)$ and $V^-(u)$ are
\emph{invariant} under the differential of the map $(Df)$, i.e., if
$u' = f(u,0)$ and  $v' = (Df(u,0))\, v$, then \cite{0951-7715-22-4-009}
\begin{equation} \label{invariant}
v\in V^+(u) \Longleftrightarrow v'\in V^+(u')\;,\quad
v\in V^-(u) \Longleftrightarrow v'\in V^-(u')\;.
\end{equation}

Uniformly hyperbolic chaotic dynamical systems are known as ``ideal chaos''.
Because of its relative simplicity, studies of hyperbolic chaos have
generated enormous insight into the properties of chaotic dynamical
systems \cite{kuznetsov2012hyperbolic}.
Although most dynamical systems encountered
in science and engineering are not uniformly hyperbolic, many of them
are classified as \emph{quasi-hyperbolic}.  These systems, including the
famous Lorenz system, have global properties
similar to those of uniformly hyperbolic systems \cite{bonatti2010dynamics}.
Results obtained on uniformly hyperbolic systems can often be
generalized to quasi-hyperbolic ones.  Scholars believe that very
complex dynamical systems like turbulence behave like they are
quasi-hyperbolic \cite{ruelle1980measures,gallavotti1995dynamical,evans2008statistical,gallavotti2006entropy}.
Although this paper focuses on proving the convergence of the least
squares shadowing method for uniformly hyperbolic systems, is has been
shown numerically that this method also works when the system is
not uniformly hyperbolic \cite{wanglcoadj}.

\section{Structural stability and the shadowing direction}
\label{s:stability}

The hyperbolic structure (\ref{hypermap}) ensures the
\emph{structurally stability}\cite{chaos}
of the attractor $\Lambda$ under perturbation in $s$.  Here we prove a
specialized version of the structural stability result.
\begin{theorem}\label{thm:map}
If (\ref{hypermap}) holds and $f$ is continuously differentiable,
then for all sequence $\{u_i^0,i\in\Z\}\subset\Lambda$ satisfying
$u_{i+1}^0=f(u_i^0,0)$, there is a $M>0$
such that for all $|s|<M$
there is a unique sequence $\{u_i^s,i\in\Z\}\subset\R^m$
satisfying $\|u_i^s-u_i^0\|<M$ and $u_{i+1}^s=f(u_i^s,s)$ for all
$i\in\Z$.  Furthermore, $u_i^s$ is $i$-uniformly
continuously differentiable to $s$.
\end{theorem}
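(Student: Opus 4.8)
The plan is to recast the problem as finding a zero of a map between Banach spaces and to apply the implicit function theorem. Writing the perturbed orbit as $u_i^s = u_i^0 + w_i$, the orbit equation $u_{i+1}^s = f(u_i^s,s)$ becomes, after subtracting the reference relation $u_{i+1}^0 = f(u_i^0,0)$, a nonlinear equation for the bi-infinite sequence $w = (w_i)_{i\in\Z}$. First I would work in the Banach space $\ell^\infty(\Z,\R^m)$ of bounded bi-infinite sequences with the supremum norm, and define $F:\ell^\infty(\Z,\R^m)\times\R\to\ell^\infty(\Z,\R^m)$ by $F(w,s)_i = u_{i+1}^0 + w_{i+1} - f(u_i^0+w_i,s)$. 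Then $F(0,0)=0$, and a zero of $F(\cdot,s)$ with small norm is exactly a shadowing orbit staying within distance $M$ of the reference orbit.

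Because $f$ is $C^1$ and $\Lambda$ is compact, $Df$ is uniformly continuous on a neighborhood of $\Lambda\times\{0\}$; this uniform continuity is what makes the superposition (Nemytskii) operator $w\mapsto (f(u_i^0+w_i,s))_i$ Fr\'echet $C^1$ on a ball in $\ell^\infty$, so that $F$ is of class $C^1$. Its partial derivative in $w$ at the origin is the bounded linear operator $L:\ell^\infty\to\ell^\infty$ given by $(Lw)_i = w_{i+1} - A_i w_i$, where $A_i := Df(u_i^0,0)$.

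The crux of the argument, and the step I expect to be hardest, is proving that $L$ is invertible with bounded inverse; this is precisely where uniform hyperbolicity enters. Given a bounded right-hand side $(g_i)$, I would decompose each $g_i$ along the invariant splitting $\R^m = V^+(u_i^0)\oplus V^-(u_i^0)$, then solve the stable component by summing forward and the unstable component by summing backward. The exponential bounds $C\lambda^{|i|}$ furnished by (\ref{hypermap}), together with the invariance (\ref{invariant}), make both geometric series converge and yield a solution whose sup norm is controlled by a constant depending only on $C$ and $\lambda$; uniqueness of the bounded solution follows because any bounded solution of the homogeneous equation must vanish under the same dichotomy estimates. The asserted continuity of $V^+(u)$ and $V^-(u)$ guarantees that the projections used in the decomposition are uniformly bounded along the orbit, which is what keeps $L^{-1}$ bounded.

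With $L = D_wF(0,0)$ invertible and $F$ of class $C^1$, the Banach-space implicit function theorem yields an $M>0$ and a unique $C^1$ map $s\mapsto w(s)\in\ell^\infty$ defined for $|s|<M$ with $w(0)=0$ and $\|w(s)\|<M$, solving $F(w(s),s)=0$. Setting $u_i^s := u_i^0 + w_i(s)$ produces the desired shadowing orbit, and the uniqueness of $w(s)$ within the ball of radius $M$ supplied by the theorem gives the uniqueness clause. Finally, since $w(s)$ is $C^1$ as an $\ell^\infty$-valued map, the derivative $dw_i/ds$ exists, is continuous in $s$, and the continuity is uniform in $i$ because convergence in the sup norm is uniform over the coordinates; this is exactly the claimed $i$-uniform continuous differentiability of $u_i^s$.
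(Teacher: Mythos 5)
Your proposal is correct and takes essentially the same route as the paper: the paper likewise works in the Banach space of bounded bi-infinite sequences with the sup norm, establishes Fr\'echet differentiability of the orbit map, inverts the linearization $v_i - (Df(u^0_{i-1},0))\,v_{i-1} = r_i$ by splitting along $V^+(u^0_i)\oplus V^-(u^0_i)$ with forward/backward geometric sums (using compactness and continuity of the splitting to bound the projections, and the dichotomy estimates to show any bounded homogeneous solution vanishes), and then applies the implicit function theorem, reading off the $i$-uniform continuous differentiability from continuity of $\frac{d\mathbf{u}^s}{ds}$ in the $\B$ norm. The only cosmetic difference is that you center coordinates at $w = u - u^0$ rather than working with $\mathbf{u}$ directly.
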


Note: $i$-uniformly continuous differentiability of $u_i^s$
means $\forall s\in(-M,M)$ and $\epsilon>0: \exists\delta:
|s'-s|<\delta \Rightarrow
\left\|\frac{du^s_i}{ds}\big|_s - \frac{du^s_i}{ds}\big|_{s'}
\right\| < \epsilon$ for all $i$.  Other than the $i$-uniformly continuous
differentiability of $u_i^s$, this theorem can be obtained directly from
the shadowing lemma\cite{pilyugin1999shadowing}.
However, the uniformly continuous
differentiability result requires a more in-depth proof.  A more general
version of this result has been proven by Ruelle\cite{springerlink10}.

To prove the theorem, we denote $\mathbf{u} = \{u_i,i\in\Z\}$. The norm
\begin{equation}
\|\mathbf{u}\|_{\B} = \sup_{i\in\Z} \|u_i\|
\end{equation}
defines a Banach space $\B$ of
uniformly bounded sequences in $\R^m$.
Define the map $F:\B\times\R\rightarrow \B$
as $F(\mathbf{u},s) = \{u_i - f(u_{i-1},s),\;i\in\Z\}$.
We use the implicit function theorem to complete the proof,
which requires $F$ to be
differentiable and its derivative to be non-singular at $\mathbf{u}^0$.

\begin{lemma}
Under the conditions of Theorem \ref{thm:map},
$F$ has Fr\'echet derivative at all $\mathbf{u}\in\B$:
$$ (DF(\mathbf{u},s))\,\mathbf{v}
= \{v_i - (Df(u_{i-1},s))\, v_{i-1}\}\;,
\quad\mbox{where}\quad \mathbf{v} = \{v_i\}$$
\end{lemma}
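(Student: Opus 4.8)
The plan is to verify Fr\'echet differentiability directly from the definition, taking as the candidate derivative the operator $L$ given componentwise by $(L\mathbf{v})_i = v_i - (Df(u_{i-1},s))\,v_{i-1}$. First I would confirm that $L$ is a bounded linear operator on $\B$. Since $\mathbf{u}\in\B$ is a uniformly bounded sequence, all of its terms lie in a closed ball $\bar B(0,R)\subset\R^m$ with $R := \|\mathbf{u}\|_{\B}$, and this ball is compact; continuity of $Df(\cdot,s)$ then yields a finite bound $K := \sup_{\|w\|\le R}\|Df(w,s)\| < \infty$, so that $\|L\mathbf{v}\|_{\B} \le (1+K)\|\mathbf{v}\|_{\B}$. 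Linearity is immediate from linearity of each $(Df(u_{i-1},s))$.

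Next I would compute the remainder componentwise. Writing out $F(\mathbf{u}+\mathbf{v},s) - F(\mathbf{u},s) - L\mathbf{v}$, the terms $u_i$ and $v_i$ cancel, and the $i$-th component reduces to $-R_i(v_{i-1})$, where
\[
R_i(w) := f(u_{i-1}+w,s) - f(u_{i-1},s) - (Df(u_{i-1},s))\,w
\]
is exactly the first-order Taylor remainder of $f(\cdot,s)$ at the base point $u_{i-1}$. Thus establishing the claim amounts to showing that $\sup_{i}\|R_i(v_{i-1})\|$ is $o(\|\mathbf{v}\|_{\B})$ as $\|\mathbf{v}\|_{\B}\to 0$.

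The hard part, and the only step requiring genuine care, is that this is a supremum over infinitely many indices, so the pointwise differentiability of $f$ at each individual $u_{i-1}$ does not by itself suffice: I need differentiability that is \emph{uniform} in $i$. The resolution is to exploit compactness. I would write the remainder in integral form,
\[
R_i(w) = \int_0^1 \bigl[(Df(u_{i-1}+tw,s)) - (Df(u_{i-1},s))\bigr]\,w\,dt,
\]
so that $\|R_i(w)\| \le \|w\|\,\sup_{0\le t\le 1}\|Df(u_{i-1}+tw,s) - Df(u_{i-1},s)\|$. Because every base point $u_{i-1}$ lies in the compact ball $\bar B(0,R)$, the continuous map $Df(\cdot,s)$ is \emph{uniformly} continuous on the slightly larger compact ball $\bar B(0,R+1)$: given $\epsilon>0$ there is a single $\delta\in(0,1)$, independent of $i$, with $\|Df(x',s)-Df(x,s)\|\le\epsilon$ whenever $x,x'\in\bar B(0,R+1)$ and $\|x'-x\|<\delta$.

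Finally, for any $\mathbf{v}$ with $\|\mathbf{v}\|_{\B}<\delta$ we have $\|t\,v_{i-1}\|<\delta$ for every $i$ and every $t\in[0,1]$, and the points $u_{i-1}+t\,v_{i-1}$ stay in $\bar B(0,R+1)$, so the integral estimate gives $\|R_i(v_{i-1})\|\le\epsilon\,\|v_{i-1}\|\le\epsilon\,\|\mathbf{v}\|_{\B}$ uniformly in $i$. Taking the supremum over $i$ yields $\|F(\mathbf{u}+\mathbf{v},s)-F(\mathbf{u},s)-L\mathbf{v}\|_{\B}\le\epsilon\,\|\mathbf{v}\|_{\B}$, which is precisely the statement that $L$ is the Fr\'echet derivative of $F$ at $\mathbf{u}$. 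I expect the passage from compactness to uniform continuity to be the crux of the argument; everything else is routine bookkeeping.
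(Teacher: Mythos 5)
Your proposal is correct and follows essentially the same route as the paper's own proof: direct verification of the Fr\'echet derivative from the definition, with the crux being that compactness of a ball containing the sequence makes $Df(\cdot,s)$ \emph{uniformly} continuous, which gives a remainder estimate uniform in $i$, plus the same boundedness argument for the candidate operator via a uniform bound on $Df$ over that compact set. The only difference is cosmetic (and in your favor): you express the Taylor remainder in integral form, whereas the paper invokes the mean value theorem with an intermediate point $\xi$, which for vector-valued $f$ is slightly less careful than your version.
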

\begin{proof}
Because $\|\mathbf{u}\|_{\B} = \sup_i \|u_i\|<\infty$, we can find
$C>2\|u_i\|$ for all $i$.  Because $f\in C^1$, its derivative
$(Df)$ is uniformly continuous in the compact set $\{u:\|u\|\le C\}$.
For $\|\mathbf{v}\|_{\B}<C/2$, we apply
the mean value theorem to obtain
$$\frac{f(u_i+ v_i,s) - f(u_i,s)}{\|\mathbf{v}\|_{\B}}
- \frac{(Df(u_i,s))\,v_i}{\|\mathbf{v}\|_{\B}} =
\frac{(Df(u_i+\xi v_i,s)) - (Df(u_i,s))}{\|\mathbf{v}\|_{\B}}\, v_i $$
where $0\le\xi\le 1$.  Because
$\|u_i+\xi v_i\|\le\|u_i\|+\|v_i\|<C$ for all $i$,
uniform continuity of $(Df)$ implies that $\forall\epsilon>0,
\exists\delta$ such that for all $\sup\|v_i\|<\delta$,
$$
\left\|\frac{(Df(u_i+\xi v_i,s)) - (Df(u_i,s))}{\|\mathbf{v}\|_{\B}}\,
v_i\right\| \le
\|Df(u_i+\xi v_i,s)) - (Df(u_i,s)\| < \epsilon$$
for all $i$.  Therefore,
\[ \begin{aligned}
\frac{F(\mathbf{u}+\mathbf{v},s) - F(\mathbf{u},s)}{\|\mathbf{v}\|_{\B}}
=& \left\{\frac{v_i}{\|\mathbf{v}\|_{\B}}
        - \frac{f(u_{i-1}+ v_{i-1},s) - f(u_{i-1},s)}{\|\mathbf{v}\|_{\B}}
\right\}\\
\longrightarrow\;&
\frac{\big\{v_i - (Df(u_{i-1},s))\,v_{i-1}\big\}}{\|\mathbf{v}\|_{\B}}
\end{aligned} \]
in the $\B$ norm.  Now we only need to show that the linear map
$\{v_i\}\rightarrow \{v_i - (Df(u_{i-1},s))\, v_{i-1}\}$ is bounded.
This is because
$(Df)$ is continuous, thus it is uniformly bounded in the compact set
$\{u:\|u\|\le C\}$.  Denote the bound in this compact set as $\|(Df)\| < A$,
then $ \big\|\{v_i - (Df(u_{i-1},s))\, v_{i-1}\}\big\|_{\B}
\le (1 + A)\; \|\{v_i\}\|_{\B}$.
\end{proof}

\begin{lemma}
Under conditions of Theorem \ref{thm:map},
the Fr\'echet derivative of $F$ at $\mathbf{u}^0$ and $s=0$ is a
bijection.
\end{lemma}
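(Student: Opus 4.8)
The plan is to recognize that the equation $(DF(\mathbf{u}^0,0))\,\mathbf{v} = \mathbf{w}$ is, componentwise, the inhomogeneous linear recurrence $v_i - A_{i-1}v_{i-1} = w_i$, where $A_{i-1} := (Df(u^0_{i-1},0))$ is the linearization of the map along the reference orbit. Proving that the derivative is a bijection then amounts to showing that for every bounded forcing $\mathbf{w}\in\B$ this recurrence has exactly one bounded solution $\mathbf{v}\in\B$. The whole argument rests on the hyperbolic splitting $\R^m = V^+(u^0_i)\oplus V^-(u^0_i)$ carried along the orbit $\{u^0_i\}\subset\Lambda$, together with two preliminary uniform inputs: first, because $f$ is a $C^1$ bijection and $Df^{(i)}$ is defined for all $i\in\Z$, each $A_i$ is invertible, so the linearized transition operators $\Phi(i,j)$ (the ordered products of the $A$'s carrying the tangent space at $u^0_j$ to that at $u^0_i$) are well defined for all $i,j$; second, because the splitting is continuous in $u$ and $\Lambda$ is compact, the projections $P^\pm_i$ onto $V^\pm(u^0_i)$ along the complementary subspace are uniformly bounded, $\|P^\pm_i\|\le K$.

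For surjectivity together with boundedness of the inverse, I would construct the solution explicitly by a discrete variation-of-parameters (Green's function) formula that integrates the stable part of the forcing from the past and the unstable part from the future:
\[
v_i = \sum_{j\le i}\Phi(i,j)\,P^-_j w_j \;-\; \sum_{j>i}\Phi(i,j)\,P^+_j w_j .
\]
Invariance of the splitting under $(Df)$ keeps $\Phi(i,j)P^-_j w_j\in V^-(u^0_i)$ and $\Phi(i,j)P^+_j w_j\in V^+(u^0_i)$, and the hyperbolic bounds $\|\Phi(i,j)|_{V^-}\|\le C\lambda^{i-j}$ for $i\ge j$ and $\|\Phi(i,j)|_{V^+}\|\le C\lambda^{j-i}$ for $i\le j$ turn each sum into a geometric series. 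Combined with $\|P^\pm_j\|\le K$, this gives $\|\mathbf{v}\|_{\B} \le \frac{CK(1+\lambda)}{1-\lambda}\|\mathbf{w}\|_{\B}$, so the series converge in $\B$ and define a bounded right inverse; a direct telescoping check using $\Phi(i,i-1)=A_{i-1}$ and $\Phi(i,i)=I$ confirms $v_i - A_{i-1}v_{i-1}=(P^-_i+P^+_i)w_i=w_i$.

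For injectivity I would take any bounded $\mathbf{v}$ with $(DF(\mathbf{u}^0,0))\,\mathbf{v}=0$, so $v_i=\Phi(i,0)v_0$, and split $v_0=v_0^++v_0^-$. Reading the hyperbolic estimates in the expanding directions forces $\|\Phi(n,0)v_0^+\|\ge C^{-1}\lambda^{-n}\|v_0^+\|$ and $\|\Phi(-n,0)v_0^-\|\ge C^{-1}\lambda^{-n}\|v_0^-\|$; using the uniform projection bound to isolate each component of $v_{\pm n}$, boundedness of $\mathbf{v}$ as $n\to+\infty$ and as $n\to-\infty$ forces $v_0^+=0$ and $v_0^-=0$ respectively, hence $\mathbf{v}=0$. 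I expect the main obstacle to be making the two uniform inputs precise rather than the final estimates: namely establishing the uniform projection bound $K$ from continuity of the splitting and compactness of $\Lambda$, and correctly orienting the decay and growth estimates so that each infinite sum is taken in the time direction in which it contracts. Once these are in place, the convergence of the Green's function series and the blow-up argument for uniqueness reduce to routine geometric-series computations.
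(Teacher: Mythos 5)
Your proposal is correct and takes essentially the same approach as the paper: your discrete variation-of-parameters formula is exactly the paper's construction $v_i = \sum_{j=0}^{\infty} (Df^{(j)}(u^0_{i-j},0))\, r_{i-j}^- - \sum_{j=1}^{\infty} (Df^{(-j)}(u^0_{i+j},0))\, r_{i+j}^+$, and your uniform projection bound $K$ is the paper's constant $1/\beta$, established by the same continuity-of-splitting plus compactness-of-$\Lambda$ argument you anticipate. Your injectivity argument (homogeneous solutions propagated by the cocycle blow up forward or backward in time unless both hyperbolic components vanish) likewise matches the paper's uniqueness proof step for step.
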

\begin{proof}
The Fr\'echet derivative of $F$ at $\mathbf{u}^0$ and $s=0$ is
$$ (DF(\mathbf{u}^0,0))\,\mathbf{v}
= \{v_i - (Df(u^0_{i-1},0))\, v_{i-1}\} $$
We only need to show that
for every $\mathbf{r}=\{r_i\}\in\B$, there exists a unique
$\mathbf{v}=\{v_i\}\in\B$ such that
$v_i - (Df(u^0_{i-1},0))\, v_{i-1} = r_i$ for all $i$.

Because of (\ref{hypermap}),
we can first split $r_i = r_i^+ + r_i^-$, where $r_i^+\in V^+(u^0_i)$ and
$r_i^-\in V^-(u^0_i)$.  Because $V^+(u)$ and $V^-(u)$ are continuous to $u$ and
$\Lambda$ is compact,
\[\inf_{\substack{u\in\Lambda\\r^{\pm}\in V^{\pm}(u)}}
\frac{\|r^++r^-\|}{\max(\|r^+\|,\|r^-\|)} = \beta > 0\;.
\]
(This is because if $\beta = 0$,
then by the continuity of $V^+(u),V^-(u)$ and the
compactness of $\big\{(u, r^{+}, r^{-}) \in \Lambda\times\R^m\times\R^m
: \max(\|r^+\|,\|r^-\|)=1\big\}$,
there must be a $u\in\Lambda, r^{+}\in V^{+}(u), r^{-}\in V^{-}(u)$ such
that $\max(\|r^+\|,\|r^-\|)=1$ and $r^++r^-=0$, which contradicts to the
hyperbolicity assumption (\ref{hypermap})).  Therefore,
$$\max(\|r_i^+\|,\|r_i^-\|) \le \frac{\|r_i\|}{\beta}
\le \frac{\|\mathbf{r}\|_{\B}}{\beta}\quad\mbox{ for all } i$$
Now let
$$ v_i = \sum_{j=0}^{\infty} (Df^{(j)}(u^0_{i-j},0))\, r_{i-j}^-
       - \sum_{j=1}^{\infty} (Df^{(-j)}(u^0_{i+j},0))\, r_{i+j}^+\;, $$
It can be verified
\footnote{
Combining
\[\sum_{j=0}^{\infty} (Df(u^0_{i-1}))(Df^{(j)}(u^0_{i-j-1}))\, r_{i-j-1}^-
= \sum_{j=0}^{\infty} (Df^{(j+1)}(u^0_{i-j-1}))\, r_{i-j-1}^-
= \sum_{j=1}^{\infty} (Df^{(j)}(u^0_{i-j}))\, r_{i-j}^- \]
and
\[\sum_{j=1}^{\infty} (Df(u^0_{i-1}))(Df^{(-j)}(u^0_{i+j-1}))\, r_{i+j-1}^+
\hspace{-1mm} =\hspace{-1mm} 
\sum_{j=1}^{\infty} (Df^{(-j+1)}(u^0_{i+j-1}))\, r_{i+j-1}^-
\hspace{-1mm} =\hspace{-1mm} 
\sum_{j=0}^{\infty} (Df^{(-j)}(u^0_{i+j}))\, r_{i+j}^-\]
we can obtain that
\[ v_i - (Df(u^0_{i-1}))\,v_{i-1}
 = v_i - \sum_{j=1}^{\infty} (Df^{(j)}(u^0_{i-j}))\, r_{i-j}^-
       + \sum_{j=0}^{\infty} (Df^{(-j)}(u^0_{i+j}))\, r_{i+j}^+\;,
 = r_{i}^- + r_{i}^+ = r_i\;.
\]
}
that $v_i - (Df(u^0_{i-1},0))\, v_{i-1} = r_i$,
and by the definition of $V^+(u)$ and $V^-(u)$,
\begin{equation}\label{biject}\begin{split}
\|v_i\| &\le \sum_{j=0}^{\infty} \left\|(Df^{(j)})(u^0_i)\, r_{i-j}^-\right\|
       + \sum_{j=1}^{\infty} \left\|(Df^{(-j)})(u^0_i)\, r_{i+j}^+\right\|\\
     &\le \sum_{j=0}^{\infty} C\,\lambda^{j} \|r_{i-j}^-\|
       + \sum_{j=1}^{\infty} C\,\lambda^{j} \|r_{i+j}^+\|
      \le\frac{2C}{1-\lambda} \frac{\|\mathbf{r}\|_{\B}}{\beta}\;,
\end{split}\end{equation}
Therefore, $v_i$ is uniformly bounded for all $i$.  Thus
$\mathbf{v}\in\B$.

Because of linearity, uniqueness of $\mathbf{v}$ such that 
$v_i - (Df(u^0_{i-1},0))\, v_{i-1} = r_i$ only need to be shown for
$\mathbf{r}=\mathbf{0}$.  To show this, we split $v_i = v^+_i + v^-_i$
where $v^+_i\in V^+(u^0_i)$ and $v^-_i\in V^-(u^0_i)$.  Because the spaces
$V^+(u^0_i)$ and $V^-(u^0_i)$ are invariant (Equation \ref{invariant}), 
\[ 0 = r_i = \left(v_i^+ - (Df(u^0_{i-1},0))\, v_{i-1}^+\right)
           + \left(v_i^- - (Df(u^0_{i-1},0))\, v_{i-1}^-\right) \]
where the two parentheses are in $V^+(u^0_i)$ and $V^-(u^0_i)$,
respectively.  Because $V^+(u^0_i)\cap V^-(u^0_i)=\{0\}$, both 
parentheses in the equation above must be 0 for all $i$, and
\[\left.
\begin{aligned}
v_i^+ &= (Df(u^0_{i-1},0))\, v_{i-1}^+ = \ldots =
(Df^{(i-j)}(u^0_j,0)\,v_j^+\\
v_i^- &= (Df(u^0_{i-1},0))\, v_{i-1}^- = \ldots =
(Df^{(i-j)}(u^0_j,0)\,v_j^-
\end{aligned}\right.\quad\mbox{for all } i>j\;.\]
By the definition of $V^+(u^0_i)$ and $V^-(u^0_i)$,
$\|v_j^+\| \le C\lambda^{i-j} \|v_i^+\|$,
$\|v_i^-\| \le C\lambda^{i-j} \|v_j^-\|$.
If $v_j^+\ne 0$ for some $j$, then
\[ \frac{\|v_i\|}{\beta} \ge \|v_i^+\|\ge \frac{\lambda^{j-i}}{C} \|v_j^+\|
\quad\mbox{for all}\quad i>j\;, \]
and $\{v_i,i\in\Z\}$ is unbounded.  Similarly, 
if $v_i^-\ne 0$ for some $i$, then
\[ \frac{\|v_j\|}{\beta} \ge \|v_j^-\|\ge \frac{\lambda^{j-i}}{C} \|v_i^-\|
\quad\mbox{for all}\quad j<i\;, \]
and $\{v_i,i\in\Z\}$ is unbounded.  Therefore, for $\{v_i\}$ to be
bounded, we must have $v_i = v_i^++v_i^-=0$ for all $i$.
This proves the uniqueness of $\mathbf{v}$ for $\mathbf{r} = \mathbf{0}$.
\end{proof}

\begin{proof}[Proof of Theorem \ref{thm:map}.]
$F(\mathbf{u}^0,0) = \{u^0_i - f(u^0_{i-1},0)\} = \mathbf{0}$.
So $\mathbf{u}^0$ is a zero point of $F$ at $s=0$.  The Combination of this
and the two lemmas enables application of the implicit function theorem.
Thus there exists $M>0$ such that for all $|s|<M$ there is a
unique $\mathbf{u}^s=\{u_i^s\}$ satisfying
$\|\mathbf{u}^s-\mathbf{u}^0\|_{\B} < M$ and
$F(\mathbf{u}^s,s)=0$.  Furthermore, $\mathbf{u}^s$ is continuously
differentiable to $s$, i.e., $\frac{d\mathbf{u}^s}{ds} \in \B$ is
continuous with respect to $s$ in the $\B$ norm.  By the definition of
derivatives (in $\B$ and in $\R^m$), $\frac{d\mathbf{u}^s}{ds} = \left\{
\frac{d u^s_i}{ds}\right\}$.  Continuity of $\frac{d\mathbf{u}^s}{ds}$ in
$\B$ then implies that $\frac{d u^s_i}{ds}$ is $i$-uniformly continuous with
respect to $s$.
\end{proof}

Theorem \ref{thm:map} states that for a series $\{u_i^0\}$ satisfying
the governing equation (\ref{map}) at $s=0$, there is a series
$\{u_i^s\}$ satisfying the governing equation at nearby values of $s$.
In addition, $u_i^s$ \emph{shadows} $u_i^0$, i.e., $u_i^s$ is close to
$u_i^0$ when $s$ is close to 0.
Also, $\left\{\frac{du_i^s}{ds}\big|_{s=0}\right\}$
exists and is $i$-uniformly bounded.

\begin{definition} \label{def:shadow}
The {\bf shadowing direction} $v_i^{\{\infty\}}$ is defined as the
uniformly bounded series
\[ \mathbf{v}^{\{\infty\}} := \left\{v_i^{\{\infty\}}\right\}
:= \left\{\frac{du_i^s}{ds}\Big|_{s=0}\right\}
= \frac{d\mathbf{u}^s}{ds}\Big|_{s=0} \in\B\;,\]
where $u_i^s$ is defined by Theorem \ref{thm:map}.
\end{definition}

The shadowing direction is the direction in which the shadowing series $u_i^s$
moves as $s$ increases from 0.  It provides a
vehicle by which we prove Theorem \ref{thm:lssmap}.  We show that
the derivative of the ergodic mean $\langle J\rangle$ to $s$ can be
obtained if the shadowing direction $v_i^{\{\infty\}}$ was given
(Section \ref{s:meanshadow}).
We then show that $v_i^{\{n\}}$, the solution to the constrained
least squares problem (\ref{lss}), sufficiently approximates the shadowing
direction $v_i^{\{\infty\}}$ when $n$ is large (Section \ref{s:lssshadow}).
We finally show in Section \ref{s:meanlss}) that the same derivative can
be obtained from the least squares solution $v_i^{\{n\}}$.

\section{Ergodic mean derivative via the shadowing direction}
\label{s:meanshadow}

This section proves an easier
version of Theorem \ref{thm:lssmap} that replaces the solution to the
constrained least squares problem
$v_i^{\{n\}},i=1,\ldots,n$ by the shadowing direction
$v_i^{\{\infty\}} = \frac{du_i^s}{ds}\big|_{s=0}$.

\begin{theorem} \label{thm:shadowmean}
If (\ref{hypermap}) holds and $f$ is continuously differentiable,
For all continuously differentiable function
$J(u,s):\R^m\times \R\rightarrow\R$ whose infinite time average
\begin{equation} \label{mapJ}
\langle J\rangle := \lim_{n\rightarrow\infty}\frac1n
\sum_{i=1}^n J(f^{(i)}(u_0,s),s)
\end{equation}
is independent of the initial state $u_0\in\R^m$,
let $\{v_i^{\{\infty\}}, i\in\Z\}$ be the sequence of shadowing
direction in Definition \ref{def:shadow}, then
\begin{equation} \label{mapJshadow}
\frac{d\langle J\rangle}{ds}\bigg|_{s=0} = \lim_{n\rightarrow\infty}
\frac1n \sum_{i=1}^n \left((DJ(u_i^0,0)) v_i^{\{\infty\}}
+ (\partial_sJ(u_i^0,0))\right)\;,
\end{equation}
\end{theorem}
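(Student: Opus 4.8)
The plan is to reduce the claim to interchanging the limit $n\to\infty$ with differentiation in $s$, and to control that interchange using the $i$-uniform continuous differentiability supplied by Theorem \ref{thm:map}. First I would exploit the hypothesis that $\langle J\rangle$ is independent of the initial state: since the shadowing sequence $\{u_i^s\}$ of Theorem \ref{thm:map} is a genuine orbit of the map at parameter $s$, averaging $J$ along it must reproduce the ergodic mean, so that
\[ \langle J\rangle(s) = \lim_{n\to\infty} A_n(s), \qquad A_n(s):=\frac1n\sum_{i=1}^n J(u_i^s,s), \]
for every $s$ in the interval $(-M,M)$ of Theorem \ref{thm:map}. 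Because $J\in C^1$ and each $u_i^s$ is continuously differentiable in $s$, every $A_n$ is $C^1$ with
\[ A_n'(s)=g_n(s):=\frac1n\sum_{i=1}^n\Big((DJ(u_i^s,s))\,\tfrac{du_i^s}{ds}+(\partial_sJ(u_i^s,s))\Big). \]
Evaluated at $s=0$ this is exactly the right-hand side of (\ref{mapJshadow}), since $\tfrac{du_i^s}{ds}\big|_{s=0}=v_i^{\{\infty\}}$ by Definition \ref{def:shadow}. Thus the theorem is the assertion that $\langle J\rangle$ is differentiable at $0$ with $\langle J\rangle'(0)=\lim_n g_n(0)$; i.e. that the $n$-limit and the $s$-derivative commute.

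The bridge between the two operations is the fundamental theorem of calculus applied to each finite average: $A_n(s)-A_n(0)=\int_0^s g_n(\sigma)\,d\sigma$. Splitting the integrand as $g_n(\sigma)=g_n(0)+\big(g_n(\sigma)-g_n(0)\big)$ gives
\[ A_n(s)-A_n(0) = s\,g_n(0) + R_n(s), \qquad R_n(s):=\int_0^s\big(g_n(\sigma)-g_n(0)\big)\,d\sigma. \]
Letting $n\to\infty$, the left side converges to the $n$-independent quantity $\langle J\rangle(s)-\langle J\rangle(0)$, so for each fixed small $s\ne0$ we have $g_n(0)=\tfrac1s\big(A_n(s)-A_n(0)\big)-\tfrac1s R_n(s)$, and everything hinges on bounding $R_n(s)$ uniformly in $n$.

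The main obstacle, and the place where hyperbolicity enters decisively, is to show that the family $\{g_n\}$ is equicontinuous at $\sigma=0$ uniformly in $n$: that $|g_n(\sigma)-g_n(0)|\le\eta(\sigma)$ with $\eta(\sigma)\to0$ as $\sigma\to0$ and $\eta$ independent of $n$. I would establish this term by term, using two facts from Theorem \ref{thm:map} that hold uniformly in $i$: first $u_i^\sigma\to u_i^0$, which is the $\B$-closeness $\|\mathbf{u}^\sigma-\mathbf{u}^0\|_{\B}\to0$; and second $\tfrac{du_i^s}{ds}\big|_{s=\sigma}\to v_i^{\{\infty\}}$ as $\sigma\to0$, which is precisely the $i$-uniform continuous differentiability, i.e. the continuity of $\sigma\mapsto\tfrac{d\mathbf{u}^\sigma}{ds}$ in the $\B$ norm. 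Combining these with the uniform continuity of $DJ$ and $\partial_sJ$ on a compact set in $(u,s)$-space containing all $(u_i^\sigma,\sigma)$, and with the uniform bound $\sup_i\|v_i^{\{\infty\}}\|=\|\mathbf{v}^{\{\infty\}}\|_{\B}<\infty$ of Definition \ref{def:shadow}, yields $|g_n(\sigma)-g_n(0)|\le\eta(\sigma)$ and hence $|R_n(s)|\le|s|\,\tilde\eta(s)$ with $\tilde\eta(s):=\sup_{|\sigma|\le|s|}\eta(\sigma)\to0$, uniformly in $n$.

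Finally I would extract the derivative by a squeeze. From $g_n(0)=\tfrac1s\big(A_n(s)-A_n(0)\big)-\tfrac1s R_n(s)$ and $|R_n(s)/s|\le\tilde\eta(s)$, sending $n\to\infty$ shows that $\limsup_n g_n(0)$ and $\liminf_n g_n(0)$ both lie within $\tilde\eta(s)$ of the $n$-independent quantity $\tfrac1s\big(\langle J\rangle(s)-\langle J\rangle(0)\big)$; since $\tilde\eta(s)\to0$ this forces the limit $L:=\lim_n g_n(0)$ to exist, and the same inequality gives $\big|\tfrac1s(\langle J\rangle(s)-\langle J\rangle(0))-L\big|\le\tilde\eta(s)\to0$, i.e. $\langle J\rangle'(0)=L=\lim_n g_n(0)$, which is (\ref{mapJshadow}). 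I note that this route never requires ergodicity at parameters other than $0$ nor a separate ergodic-averaging argument for the derived observable: the convergence of $g_n(0)$ is produced for free by the squeeze, so the only serious analytic input beyond elementary calculus is the uniform equicontinuity, which rests entirely on Theorem \ref{thm:map}.
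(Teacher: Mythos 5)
Your proposal is correct and takes essentially the same route as the paper: both average $J$ along the shadowing orbit $\{u_i^s\}$ (using independence of the initial state) and justify exchanging $\lim_{n\rightarrow\infty}$ with $d/ds$ via the same $i$-uniform estimate $\sup_i\|\gamma_i^{\sigma}-\gamma_i^{0}\|\rightarrow 0$ as $\sigma\rightarrow 0$ (your $g_n$ is the average of the paper's $\gamma_i^s$), which rests on Theorem \ref{thm:map}'s $i$-uniform continuous differentiability together with uniform continuity of $(DJ)$ and $(\partial_s J)$ on a compact neighborhood of $\Lambda\times\{0\}$. The only cosmetic difference is that you express the remainder through the fundamental theorem of calculus and an explicit squeeze, where the paper applies the mean value theorem to $\sigma\mapsto J(u_i^{\sigma},\sigma)$; if anything, your write-up makes more explicit why $\lim_n g_n(0)$ exists and why $\langle J\rangle$ is differentiable at $0$, steps the paper's argument leaves somewhat implicit.
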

\begin{proof}
This proof is essentially an exchange of limits through uniform
convergence.
Because $\langle J\rangle$ in Equation (\ref{mapJ}) independent of
$u_0$, we set $u_0=u^s_0$ in Theorem \ref{thm:map}
(thus $f^{(i)}(u_0^s,s)=u^s_i$) and obtain
\[
\frac{d\langle J\rangle}{ds}\bigg|_{s=0}
= \lim_{s\rightarrow 0}
\frac{\langle J\rangle|_{s=s} - \langle J\rangle|_{s=0}}{s}
= \lim_{s\rightarrow 0}\lim_{n\rightarrow\infty}\frac1n
\sum_{i=1}^n \frac{J(u_i^{s},s) - J(u_i^0,0)}{s}
\]
Denote
\[ \gamma^s_i = \dfrac{dJ(u_i^{s},s)}{ds}
 = (DJ(u_i^s,s)) \dfrac{du_i^s}{ds} + (\partial_sJ(u_i^s,s))\]
and use the mean value theorem, we obtain
\[
\frac{d\langle J\rangle}{ds}\bigg|_{s=0}
= \lim_{s\rightarrow 0}\lim_{n\rightarrow\infty}\frac1n
\sum_{i=1}^n \gamma^{\xi_i(s)}_i\;,
\;\mbox{ where all } |\xi_i(s)|\le|s|.
\]
Because $J$ is continuously differentiable, we can choose a compact
neighborhood of $\Lambda\times\{0\}\subset\R^m\times\R$ in which
both $(DJ(u,s))$ and $(\partial_sJ(u,s))$ are uniformly continuous.
When $s$ is sufficiently small, this neighborhood of $\Lambda\times\{0\}$
contains $(u_i^{s}, s)$
for all $i$ because $u_i^{0}\in\Lambda$ and $u_i^{s}$ are $i$-uniformly
continuously differentiable (from Theorem \ref{thm:map})
and therefore are $i$-uniformly continuous.  Also, $\dfrac{du_i^s}{ds}$
are $i$-uniformly continuous.  Therefore,
for all $\epsilon>0$, there exists $M>0$, such that for all
$|\xi|<M$,
$$\|\gamma^{\xi}_i - \gamma^0_i\|<\epsilon\quad\forall i.$$
Therefore, for all $|s|<M$, $|\xi_i(s)|\le|s|\le M$ for all
$i$, thus for all $n>0$,
\[ \left\|\frac1n \sum_{i=1}^n \gamma^{\xi_i(s)}_i
 - \frac1n \sum_{i=1}^n \gamma^0_i \right\|
 \le \frac1n \sum_{i=1}^n \left\|\gamma^{\xi_i(s)}_i - \gamma^0_i\right\|
 < \epsilon\;.
\]
thus,
\[ \left\|\lim_{n\rightarrow\infty}\frac1n \sum_{i=1}^n
\gamma^{\xi_i(s)}_i
 - \lim_{n\rightarrow\infty}\frac1n \sum_{i=1}^n \gamma^0_i \right\|
 \le \epsilon\;.
\]
Therefore,
\[ \frac{d\langle J\rangle}{ds}\bigg|_{s=0} = 
\lim_{s\rightarrow 0}\lim_{n\rightarrow\infty}\frac1n
\sum_{i=1}^n \gamma^{\xi_i(s)}_i
   = \lim_{n\rightarrow\infty}\frac1n \sum_{i=1}^n
   \gamma^0_i\;.
\]
This competes the proof via the definition of $\gamma_i^0$ and
$v_i^{\{\infty\}}$.
\end{proof}

With Theorem \ref{thm:shadowmean}, we are one step away from the main
theorem (Theorem \ref{thm:lssmap}) -- the shadowing direction
$v_i^{\{\infty\}}$ in Theorem \ref{thm:shadowmean} needs to
be replaced by the solution $v_i^{\{n\}}$ to the least squares problems
(\ref{lss}).  The next section proves a bound of the distance between
$v_i^{\{\infty\}}$ and $v_i^{\{n\}}$.

\section{Computational approximation of shadowing direction}
\label{s:lssshadow}

This section assumes all conditions of Theorem \ref{thm:map},
and focus on when $s=0$.  We denote $u_i^0$ by $u_i$
in this section and the next section.

The main task of this section is providing a bound for
\begin{equation} \label{shadowerr}
e_i^{\{n\}} = v_i^{\{n\}} - v_i^{\{\infty\}} \;,\quad i=1,\ldots,n
\end{equation}
where $v_i^{\{n\}}$ is the solution to the least squares problem
\begin{equation} \label{lss2}
\min \frac12\sum_{i=1}^n v_i^{\{n\}T} v_i^{\{n\}}\quad\mbox{s.t.}\quad
v_{i+1}^{\{n\}} = (Df(u_i,0))\, v_i^{\{n\}} + (\partial_sf(u_i,0)),
\quad i=1,\ldots,n-1.
\end{equation}
This is a mathematically new result in the following sense.  The
shadowing lemma guarantees the existence of a shadowing trajectory, but
provides no clear way to numerically compute it or its derivative.  This section
suggests that the solution to the least squares problem (\ref{lss2}) is a
useful approximation to the derivative of the shadowing trajectory, and
proves a bound of the approximation error.
This bound will then enable us to show that the difference
between $v_i^{\{n\}}$ and $v_i^{\{\infty\}}$ makes a vanishing difference in
Equation (\ref{mapJshadow}) as $n\rightarrow\infty$.

\begin{lemma} \label{thm:6}
$e_i^{\{n\}}$ as defined in Equation (\ref{shadowerr}) satisfy
\begin{equation} \label{erroreqn}
e_{i+1}^{\{n\}} = (Df(u_i,0))\, e_i^{\{n\}}\;,\quad i=1,\ldots,n-1
\end{equation}
In addition, their components in the stable and unstable directions,
$e_i^{\{n\}+}\in V^+(u_i)$ and $e_i^{\{n\}-}\in V^-(u_i)$, where
$e_i^{\{n\}+} + e_i^{\{n\}-} = e_i^{\{n\}}$, satisfies
\begin{equation} \label{erroreqn2}
e_{i+1}^{\{n\}+} = (Df(u_i,0))\, e_i^{\{n\}+}\;,\quad
e_{i+1}^{\{n\}-} = (Df(u_i,0))\, e_i^{\{n\}-}\;,\quad i=1,\ldots,n-1
\end{equation}
\end{lemma}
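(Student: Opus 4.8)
The plan is to show that the shadowing direction $v_i^{\{\infty\}}$ obeys exactly the same inhomogeneous linear recursion that the constraint in (\ref{lss2}) imposes on $v_i^{\{n\}}$, so that upon subtraction the two inhomogeneous terms $(\partial_s f(u_i,0))$ cancel and leave the homogeneous recursion (\ref{erroreqn}). The constraint already supplies this recursion for $v_i^{\{n\}}$; the only work is to derive the analogous relation for $v_i^{\{\infty\}}$ directly from Definition \ref{def:shadow}.

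First I would differentiate the defining relation of the shadowing trajectory. By Theorem \ref{thm:map} the sequence $u_i^s$ satisfies $u_{i+1}^s = f(u_i^s, s)$ and is continuously differentiable in $s$, while $f\in C^1$. Differentiating both sides in $s$ and applying the chain rule gives $\frac{d u_{i+1}^s}{ds} = (Df(u_i^s,s))\,\frac{d u_i^s}{ds} + (\partial_s f(u_i^s,s))$. Evaluating at $s=0$ and recalling $u_i^0 = u_i$ together with $v_i^{\{\infty\}} = \frac{d u_i^s}{ds}\big|_{s=0}$ from Definition \ref{def:shadow} yields $v_{i+1}^{\{\infty\}} = (Df(u_i,0))\,v_i^{\{\infty\}} + (\partial_s f(u_i,0))$, identical in form to the constraint on $v_i^{\{n\}}$. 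Subtracting this from the constraint in (\ref{lss2}) then proves (\ref{erroreqn}): the inhomogeneous terms cancel and $e_{i+1}^{\{n\}} = v_{i+1}^{\{n\}} - v_{i+1}^{\{\infty\}} = (Df(u_i,0))\,(v_i^{\{n\}} - v_i^{\{\infty\}}) = (Df(u_i,0))\,e_i^{\{n\}}$ for $i=1,\ldots,n-1$.

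For the split recursion (\ref{erroreqn2}), I would apply $(Df(u_i,0))$ to the decomposition $e_i^{\{n\}} = e_i^{\{n\}+} + e_i^{\{n\}-}$ and invoke the invariance property (\ref{invariant}): since $e_i^{\{n\}+}\in V^+(u_i)$ and $e_i^{\{n\}-}\in V^-(u_i)$, their images $(Df(u_i,0))\,e_i^{\{n\}+}$ and $(Df(u_i,0))\,e_i^{\{n\}-}$ lie in $V^+(u_{i+1})$ and $V^-(u_{i+1})$, respectively. By (\ref{erroreqn}) their sum equals $e_{i+1}^{\{n\}}$, so this already exhibits a splitting of $e_{i+1}^{\{n\}}$ into its $V^+(u_{i+1})$ and $V^-(u_{i+1})$ components. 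Because $V^+(u_{i+1})\cap V^-(u_{i+1}) = \{0\}$, this splitting is unique and must coincide with $e_{i+1}^{\{n\}+} + e_{i+1}^{\{n\}-}$, which identifies the two componentwise recursions in (\ref{erroreqn2}).

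The only substantive step is the chain-rule differentiation of $u_{i+1}^s = f(u_i^s,s)$, and I do not expect it to pose real difficulty: it is legitimate precisely because Theorem \ref{thm:map} furnishes the differentiability of $u_i^s$ in $s$ and $f$ is $C^1$. Everything downstream — the cancellation of the inhomogeneous term and the invariance-plus-uniqueness argument for the split — is algebraic and relies only on results already established in the excerpt.
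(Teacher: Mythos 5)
Your proposal is correct and follows essentially the same route as the paper: differentiate $u_{i+1}^s = f(u_i^s,s)$ in $s$ at $s=0$ to get the inhomogeneous recursion for $v_i^{\{\infty\}}$, subtract it from the constraint in (\ref{lss2}) to cancel the $(\partial_s f)$ terms, then use the invariance property (\ref{invariant}) together with $V^+(u_{i+1})\cap V^-(u_{i+1})=\{0\}$ to split the homogeneous recursion. Your phrasing of the last step via uniqueness of the direct-sum decomposition is just a cosmetic rewording of the paper's argument that the two parenthesized terms sum to zero and each lies in one subspace.
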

\begin{proof}
By definition, $u_{i+1}^s=f(u_i^s,s)$ for all $s$ in a neighborhood of
0.  By taking derivative to $s$ on both sides, we obtain
\[ v_{i+1}^{\{\infty\}} = (Df(u_i,0)) v_i^{\{\infty\}} +
(\partial_sf(u_i,0)) \]
Subtracting this from the constraint in Equation (\ref{lss2}), we obtain
Equation (\ref{erroreqn}).

By substituting $e_i^{\{n\}} = e_i^{\{n\}+} + e_i^{\{n\}-}$ into Equation
(\ref{erroreqn}), we obtain
\[ \left(e_{i+1}^{\{n\}+} - (Df(u_i,0))\, e_i^{\{n\}+}\right)
 + \left(e_{i+1}^{\{n\}-} - (Df(u_i,0))\, e_i^{\{n\}-}\right)
 = 0 \]
Because the spaces $V^+(u)$ and $V^-(u)$ are invariant
(Equation (\ref{invariant})),
\[(Df(u_i,0))\, e_i^{\{n\}\pm}\in V^{\pm}(u_{i+1}),\quad\mbox{thus}\quad
 \left(e_{i+1}^{\{n\}\pm} - (Df(u_i,0))\,
e_i^{\{n\}\pm}\right)\in V^{\pm}(u_{i+1})\;.
\]
Because they sum to 0, both parentheses must be in
$V^+(u_{i+1}) \cap V^-(u_{i+1}) = \{0\}$.  This proves Equation
(\ref{erroreqn2}).
\end{proof}

Lemma \ref{thm:6} indicates that for all $\epsilon^+$ and $\epsilon^-$,
\begin{equation} \label{vprime}
v_i'^{\{n\}} = v_i^{\{n\}} + \epsilon^+ e_i^{\{n\}+} + \epsilon^- e_i^{\{n\}-}
\end{equation}
satisfies the constraint in Problem (\ref{lss2}), i.e.,
\[ v_{i+1}'^{\{n\}} = (Df(u_i,0))\, v_i'^{\{n\}} + (\partial_sf(u_i,0)),
\quad i=1,\ldots,n-1\;.  \]
Because $v_i^{\{n\}}$ is the solution to Problem (\ref{lss2}),
it must be true that
\[ \sum_{i=1}^n v_i^{\{n\}T} v_i^{\{n\}} \le \sum_{i=1}^n v_i'^{\{n\}T}
v_i'^{\{n\}} \quad\mbox{ for all }\epsilon^+ \mbox{ and }\epsilon^-\;.
\]
By substituting the definition of $v'_i$ in Equation (\ref{vprime}),
and use the first order optimality condition with respect to
$\epsilon^+$ and $\epsilon^-$ at $\epsilon^+=\epsilon^-=0$, we obtain
\begin{equation} \label{opteps}
\sum_{i=1}^n v_i^{\{n\}T}e_i^{\{n\}+}
= \sum_{i=1}^n v_i^{\{n\}T} e_i^{\{n\}-} = 0
\end{equation}
By substituting
$ v_i^{\{n\}} = v_i^{\{\infty\}} + e_i^{\{n\}}
= v_i^{\{\infty\}} + e_i^{\{n\}+} + e_i^{\{n\}-} $
into Equation (\ref{opteps}), we obtain
\begin{equation}\label{equality} \begin{split}
&
\sum_{i=1}^n (v_i^{\{\infty\}})^T e_i^{\{n\}+} +
\sum_{i=1}^n (e_i^{\{n\}+})^T e_i^{\{n\}+} +
\sum_{i=1}^n (e_i^{\{n\}-})^T e_i^{\{n\}+} = 0 \\
&
\sum_{i=1}^n (v_i^{\{\infty\}})^T e_i^{\{n\}-} +
\sum_{i=1}^n (e_i^{\{n\}+})^T e_i^{\{n\}-} +
\sum_{i=1}^n (e_i^{\{n\}-})^T e_i^{\{n\}-} = 0 \\
\end{split} \end{equation}
To transform Equation (\ref{equality}) into bounds on $e_i^{\{n\}+}$ and
$e_i^{\{n\}-}$, we need the following lemma.

\begin{lemma}\label{thm:ineq}
The hyperbolic splitting of $e_i^{\{n\}}$ as defined in Equation (\ref{shadowerr})
satisfies
\[ \|e_i^{\{n\}+}\| \le C\,\lambda^{n-i} \|e_n^{\{n\}+}\| \;,\quad
   \|e_i^{\{n\}-}\| \le C\,\lambda^{i}  \|e_0^{\{n\}-}\| \]
\end{lemma}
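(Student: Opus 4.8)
The plan is to observe that Lemma \ref{thm:6} has already done the essential analytic work: by Equation (\ref{erroreqn2}), the unstable component $e_i^{\{n\}+}$ and the stable component $e_i^{\{n\}-}$ each propagate under the \emph{homogeneous} linearized dynamics $(Df)$, without the inhomogeneous forcing $(\partial_sf)$. Consequently $e_i^{\{n\}+}$ is tied to $e_n^{\{n\}+}$ simply by iterating the differential, and likewise $e_i^{\{n\}-}$ is obtained from $e_0^{\{n\}-}$. The two claimed inequalities then follow by reading the decay rates directly off the \emph{definitions} of the unstable subspace $V^+$ and the stable subspace $V^-$ in (\ref{hypermap}), taking care with the sign of the iteration index. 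Crucially, the reference endpoint is chosen so that each component is \emph{contracting} toward it: the end $n$ for the unstable part, the start $0$ for the stable part.

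For the unstable bound, I would compose the first relation in (\ref{erroreqn2}) over $i, i+1, \ldots, n-1$ to obtain $e_n^{\{n\}+} = (Df^{(n-i)}(u_i,0))\, e_i^{\{n\}+}$. Since $f$, and hence $(Df)$ along the orbit, is invertible, this is equivalent to
\[ e_i^{\{n\}+} = (Df^{(i-n)}(u_n,0))\, e_n^{\{n\}+}\;. \]
By definition $e_n^{\{n\}+}\in V^+(u_n)$, and because $i<n$ the iteration index $i-n$ is negative. The defining inequality of $V^+(u_n)$ therefore applies with this negative index and gives $\|e_i^{\{n\}+}\| \le C\,\lambda^{-(i-n)}\,\|e_n^{\{n\}+}\| = C\,\lambda^{n-i}\,\|e_n^{\{n\}+}\|$, which is the first claim.

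For the stable bound I would argue symmetrically, but propagating \emph{forward} from index $0$. After extending (\ref{erroreqn2}) one step backward through the invertibility of $(Df)$ to define $e_0^{\{n\}-}\in V^-(u_0)$, iteration of the second relation in (\ref{erroreqn2}) yields $e_i^{\{n\}-} = (Df^{(i)}(u_0,0))\, e_0^{\{n\}-}$. Now the index $i$ is positive, so the defining inequality of $V^-(u_0)$ applies directly and gives $\|e_i^{\{n\}-}\| \le C\,\lambda^{i}\,\|e_0^{\{n\}-}\|$, the second claim.

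The derivation is short precisely because the real content lives in Lemma \ref{thm:6}; what remains is bookkeeping rather than analysis, so I do not expect a genuine obstacle. The only points needing care are, first, invoking each defining inequality with an index of the correct sign — which forces the ``unstable-to-$n$, stable-to-$0$'' choice of reference endpoints — and, second, checking that the evolved vectors actually lie in $V^+(u_n)$ and $V^-(u_0)$ (immediate from how the splitting $e_i^{\{n\}\pm}$ is defined, together with the invariance (\ref{invariant}) underlying (\ref{erroreqn2})) and that $e_0^{\{n\}-}$ is well defined (immediate from the bijectivity of $f$).
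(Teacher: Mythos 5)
Your proof is correct and follows exactly the route the paper intends: the paper's own proof is a one-line remark that the lemma is ``a direct consequence of Equation (\ref{erroreqn2}) and the definition of $V^+$ and $V^-$,'' and your argument simply fills in those details --- iterating the homogeneous relations, inverting to reach the reference endpoints $n$ and $0$, and applying the defining inequalities of $V^{\pm}$ with indices of the correct sign. Your extra care in defining $e_0^{\{n\}-}$ via bijectivity of $f$ addresses a point the paper glosses over, but introduces no new idea beyond the paper's approach.
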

\begin{proof}
This is a direct consequence of Equation (\ref{erroreqn2}) and the
definition of $V^+$ and $V^-$ in Equation (\ref{hypermap}).
\end{proof}

By combining the first equality in Equation (\ref{equality})
with Lemma \ref{thm:ineq} and using the Cauchy-Schwarz inequality, we obtain
\[\begin{aligned}
 \|e_n^{\{n\}+}\|^2 &\le \sum_{i=1}^n (e_i^{\{n\}+})^T e_i^{\{n\}+}
 = -\sum_{i=1}^n (v_i^{\{\infty\}})^T e_i^{\{n\}+}
   -\sum_{i=1}^n (e_i^{\{n\}-})^T e_i^{\{n\}+} \\
&\le \sum_{i=1}^n \|v_i^{\{\infty\}}\| \|e_i^{\{n\}+}\|
 +  \sum_{i=1}^n \|e_i^{\{n\}-}\|\|e_i^{\{n\}+}\| \\
&\le \sum_{i=1}^n  C\,\lambda^{n-i}\|v_i^{\{\infty\}}\| \|e_n^{\{n\}+}\|
 +  \sum_{i=1}^n C^2\lambda^{n}\|e_0^{\{n\}-}\|\|e_n^{\{n\}+}\|
 \end{aligned} \]
Therefore,
\[ \|e_n^{\{n\}+}\| \le \frac{C}{1-\lambda}
\left\|\mathbf{v}^{\{\infty\}}\right\|_{\B} 
+ nC^2\lambda^n \|e_0^{\{n\}-}\| \]
where the $\B$ norm is as defined in Section \ref{s:stability}, and
is finite by Theorem \ref{thm:map}.
Similarly, by combining the second equality in Equation (\ref{equality})
with Lemma \ref{thm:ineq} and using the Cauchy-Schwarz inequality, 
\[ \|e_0^{\{n\}-}\| \le \frac{C}{1-\lambda} 
\left\|\mathbf{v}^{\{\infty\}}\right\|_{\B}
+ nC^2\lambda^n \|e_n^{\{n\}+}\| \]
When $n$ is sufficiently large such that $nC^2\lambda^n<\frac13$, we can
substitute both inequalities into each other and obtain
\begin{equation}\label{ineq}
\|e_n^{\{n\}+}\| \le \frac{2C}{1-\lambda}
\left\|\mathbf{v}^{\{\infty\}}\right\|_{\B}\;,\quad
\|e_n^{\{n\}-}\| \le \frac{2C}{1-\lambda}
\left\|\mathbf{v}^{\{\infty\}}\right\|_{\B}\;,
\end{equation}
This inequality leads to the following theorem that bounds the norm of
$e_i^{\{n\}}$, the difference between the least squares solution
$v_i^{\{n\}}$ and the shadowing direction $v_i^{\{\infty\}}$.

\begin{theorem} \label{errbound}
If $n$ is sufficiently large such that $3nC\lambda^n < 1$,
then $e_i^{\{n\}}$ as defined in Equation (\ref{shadowerr}) satisfies
\[ \|e_i^{\{n\}}\| <
\frac{2C^2}{1-\lambda}\left\|\mathbf{v}^{\{\infty\}}\right\|_{\B}
(\lambda^{i} + \lambda^{n-i})\;,\quad i=1,\ldots,n\]
\end{theorem}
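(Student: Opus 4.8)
The plan is to reduce the claim to the two boundary estimates already obtained in (\ref{ineq}) by propagating them into the interior through the hyperbolic splitting. First I would invoke Lemma \ref{thm:6}, which gives the decomposition $e_i^{\{n\}} = e_i^{\{n\}+} + e_i^{\{n\}-}$ with $e_i^{\{n\}\pm}\in V^\pm(u_i)$, together with the triangle inequality $\|e_i^{\{n\}}\| \le \|e_i^{\{n\}+}\| + \|e_i^{\{n\}-}\|$. This splits the task into bounding the unstable and stable components of the error separately, each of which evolves autonomously under $(Df)$ by (\ref{erroreqn2}).

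Next I would transport each component from the time horizon at which it is anchored to a generic interior index $i$ using Lemma \ref{thm:ineq}. The unstable component satisfies $\|e_i^{\{n\}+}\| \le C\lambda^{n-i}\|e_n^{\{n\}+}\|$, so it is controlled by its value at the right endpoint $i=n$ and decays going backward in time; the stable component satisfies $\|e_i^{\{n\}-}\| \le C\lambda^{i}\|e_0^{\{n\}-}\|$, so it is controlled by its value at the left endpoint $i=0$ and decays going forward in time. The role of the hypothesis $3nC\lambda^n<1$ is exactly to make the self-referential argument behind (\ref{ineq}) close, so that both boundary quantities obey the $n$-independent bound $\|e_n^{\{n\}+}\|,\,\|e_0^{\{n\}-}\| \le \frac{2C}{1-\lambda}\|\mathbf{v}^{\{\infty\}}\|_{\B}$.

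Substituting these endpoint bounds into the two transport inequalities yields $\|e_i^{\{n\}+}\| \le \frac{2C^2}{1-\lambda}\|\mathbf{v}^{\{\infty\}}\|_{\B}\,\lambda^{n-i}$ and $\|e_i^{\{n\}-}\| \le \frac{2C^2}{1-\lambda}\|\mathbf{v}^{\{\infty\}}\|_{\B}\,\lambda^{i}$, and adding the two through the triangle inequality produces the stated bound with the symmetric factor $\lambda^i+\lambda^{n-i}$.

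I do not expect a genuine obstacle here, since the analytically delicate step -- the bootstrapping that produces the uniform boundary estimates (\ref{ineq}) independent of $n$ -- has already been carried out. The only point demanding care is bookkeeping of the index conventions: the unstable part must be anchored at $i=n$ and the stable part at $i=0$, so that the two exponential decay rates combine into the $\lambda^i+\lambda^{n-i}$ profile. This profile is the essential qualitative feature of the result: the approximation error is a boundary-layer phenomenon, large only near the two ends $i=0$ and $i=n$ of the least squares window and exponentially small in the interior, which is precisely the structure needed to show in the next section that $e_i^{\{n\}}$ contributes a vanishing amount to the Ces\`aro average in (\ref{mapJshadow}).
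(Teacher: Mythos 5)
Your proposal is correct and takes essentially the same route as the paper's own proof: the triangle inequality applied to the hyperbolic splitting from Lemma \ref{thm:6}, the transport estimates of Lemma \ref{thm:ineq} anchoring the unstable component at $i=n$ and the stable component at $i=0$, and substitution of the endpoint bounds (\ref{ineq}). Your reading of the second bound in (\ref{ineq}) as applying to $\|e_0^{\{n\}-}\|$ is the intended one (the paper's displayed equation contains a subscript typo there), so nothing further is needed.
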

\begin{proof}
From the hyperbolicity assumption (\ref{hypermap}) and Lemma \ref{thm:ineq},
\[ \|e_i^{\{n\}}\| \le \|e_i^{\{n\}+}\| + \|e_i^{\{n\}-}\|
 \le C\,\lambda^{n-i}\|e_n^{\{n\}+}\| + C\,\lambda^{i}\|e_0^{\{n\}-}\| \]
The theorem is then obtained by substituting Equation (\ref{ineq})
into $\|e_n^{\{n\}+}\|$ and $\|e_0^{\{n\}-}\|$ in the inequality above.
\end{proof}

This theorem shows that $v_i^{\{n\}}$ is a good approximation of the
shadowing direction $v_i^{\{\infty\}}$ when $n$ is large and
$-\log\lambda\ll i\ll n+\log\lambda$.  The next section shows
that the approximation has a vanishing error in Equation
(\ref{lssmap}) as $n\rightarrow\infty$.  Combined with Theorem
\ref{thm:shadowmean}, we then prove a rigorous statement of Theorem
\ref{thm:lssmap}.

\section{Convergence of least squares shadowing}
\label{s:meanlss}
This section uses the results of the previous sections to prove
our main theorem.
\begin{customthm}{LSS}
For a $C^1$ map $f:\R^m\times\R\rightarrow\R^m$, assume $f(\cdot,0)$ is
bijective and defines a compact global hyperbolic attractor $\Lambda$.
For a $C^1$ map $J:\R^m\times\R\rightarrow\R$ whose infinite time
average $\langle J\rangle$ defined in Equation (\ref{mapJ}) 
is independent of the initial state $u_0\in\R^m$.
For a sequence $\{u_i,i\in\Z\}\subset\Lambda$ satisfying
$u_{i+1}=f(u_i,0)$, denote $v_i^{\{n\}}\in\R^m, i=1,\ldots,n$
as the solution to the constrained least squares problem (\ref{lss}),
Then the following limit exists and is equal to
\begin{equation} \label{formula}
 \lim_{n\rightarrow\infty}
\frac1n \sum_{i=1}^n
\left((DJ(u_i,0))\, v_i^{\{n\}} + (\partial_sJ(u_i,0))\right)
=
\frac{d\langle J\rangle}{ds}\bigg|_{s=0}\;.
\end{equation}
\end{customthm}
\begin{proof}
Because $J$ is $C^1$ and $\Lambda$ is compact,
$(DJ(u_i,0))$ is uniformly bounded, i.e., there exists $A$
such that $\|(DJ(u_i,0))\|<A$ for all $i$.
Let $e_i^{\{n\}}$ be defined as in Equation (\ref{shadowerr}), whose
norm is bounded by Theorem \ref{errbound}, then for large enough $n$,
\[\begin{aligned}
&
\left|\frac{1}n\sum_{i=1}^n \left((DJ(u_i,0))\, v_i^{\{n\}} +
(\partial_sJ(u_i,0))\right)
-
\frac{1}n\sum_{i=1}^n \left((DJ(u_i,0)) v_i^{\{\infty\}}
+ (\partial_sJ(u_i,0))\right)\right| \\
&= \left|
\frac{1}n\sum_{i=1}^n (DJ(u_i,0))\, e_i^{\{n\}} \right|
\le 
\frac{1}n\sum_{i=1}^n \left\| (DJ(u_i,0)) \right\| \|e_i^{\{n\}}\| \\
&<
\frac{1}{n}\sum_{i=1}^n 
\frac{2A\,C^2}{1-\lambda}\left\|\mathbf{v}^{\{\infty\}}\right\|_{\B}
(\lambda^{i} + \lambda^{n-i})
< \frac{1}{n}
\frac{4A\,C^2}{(1-\lambda)^2}\left\|\mathbf{v}^{\{\infty\}}\right\|_{\B}
\xrightarrow{n\rightarrow\infty}0
\end{aligned} \]
Therefore,
\[\begin{aligned}
& \lim_{n\rightarrow\infty} \frac1{n} \sum_{i=1}^n
\left((DJ(u_i,0))\, v_i^{\{n\}} + (\partial_sJ(u_i,0))\right) \\
=&
\lim_{n\rightarrow\infty} \frac1{n} \sum_{i=1}^n
\left((DJ(u_i,0))v_i^{\{\infty\}}
+ (\partial_sJ(u_i,0))\right) = \frac{d\langle J\rangle}{ds}\bigg|_{s=0}
\end{aligned} \]
by Theorem \ref{thm:shadowmean}.
\end{proof}

\section{The least squares shadowing algorithm}
\label{s:algo}
A practicable algorithm based on Theorem \ref{thm:lssmap} is the
following.
\begin{enumerate}
\item Choose large enough $n_0$ and $n$, and an arbitrary starting point
$u_{-n_0}\in\R^m$.
\item Compute $u_{i+1} = f(u_i,s), i=-n_0,\ldots,0,1,\ldots,n$.  

For large enough $n_0$, $u_1,\ldots,u_n$ are approximately on the
global attractor $\Lambda$.
\item Solve the system of linear equations
\[\left\{\begin{aligned}
& v_{i+1} = (Df(u_i,s))\, v_{i} + (\partial_sf(u_i,s)),&& i=1,\ldots,n-1\\
& w_{i-\frac12} = (Df(u_i,s))^T w_{i+\frac12} + v_i\;, && i=1,\ldots,n\\
& w_{\frac12} = w_{n+\frac12} = 0 &&\\
\end{aligned}\right.\]
which is the first order optimality condition of the constrained
least squares problem (\ref{lss}), and gives its unique solution
$v_1^{\{n\}},\ldots,v_n^{\{n\}}$.
Note that a linear relation between $w_{i-\frac12},
w_{i+\frac12}$ and $w_{i+\frac32}$ can be obtained by substituting the second
equation into the first one.  The resulting matrix system is
block-tridiagonal, where the block size is the dimension of the
dynamical system $m$.  A banded matrix solver can then be
used to solve the system.
\item Compute the desired derivative by
\begin{equation} \label{approx}
\frac{d\langle J\rangle}{ds}\approx 
\frac1n \sum_{i=1}^n
\left((DJ(u_i,0))\, v_i^{\{n\}} + (\partial_sJ(u_i,0))\right)\;.
\end{equation}
\end{enumerate}
Most of the computation time in this algorithm is spent
on solving the block-tridiagonal system in Step 3.  Because the
$n\,m\times n\,m$ matrix has a 
bandwidth of $4m-1$, the computational cost of a banded solver (e.g.,
Lapack's dgbsv routine\cite{laug}) is bounded by $O(n\,m^3)$.
Here $n$ is the length of the trajectory, and $m$ is the dimension
of the dynamical system.  $O(n\,m^3)$ is
the leading term in the number of operations of the algorithm presented
in this paper.

Theorem \ref{thm:lssmap} shows that the computed derivative is accurate
for large $n$.  The approximation error of Equation (\ref{approx})
originates from two sources,
\begin{equation}
\frac{d\langle J\rangle}{ds} - 
\frac1n \sum_{i=1}^n
\left((DJ(u_i,0))\, v_i^{\{n\}} + (\partial_sJ(u_i,0))\right) = E_1 + E_2
\end{equation}
where
\begin{equation}\label{E1} E_1 = 
\frac{d\langle J\rangle}{ds} - 
\frac1n \sum_{i=1}^n \left((DJ(u_i,0)) v_i^{\{\infty\}}
+ (\partial_sJ(u_i,0))\right)\;.
\end{equation}
Theorem \ref{thm:shadowmean} guarantees that
$E_1\xrightarrow{n\to\infty}0$.  This error represents the difference
between an ergodic mean and an average over a finite trajectory.  If the
dynamical system is mixing, the central limit theorem implies that
$E_1\sim O(n^{-\frac12})$.  The other part of the error is
\begin{equation}\label{E2} E_2 = 
\frac1n \sum_{i=1}^n\; \left(DJ(u_i,0)\right)
\left(v_i^{\{\infty\}} - v_i^{\{n\}}\right)
\;.
\end{equation}
Theorem \ref{errbound} guarantees that $E_2\sim O(n^{-1})$.  Because
$E_1$ has a slower rate of decay, the rate of convergence of the
algorithm presented in this paper is $O(n^{-\frac12})$ for
sufficiently large $n$.

\section{A numerical demonstration}
The algorithm described in Section \ref{s:algo}
is implemented in the Python code {\bf lssmap}, available at
\underline{https://github.com/qiqi/lssmap}.
\footnote{
All the numerical results in this section is obtained
by running revision
fa82e4241ad3d2a62603224d4afa54c9500f6224 of this code hosted on github}.

\begin{figure}[htb!]\centering
\includegraphics[width=2.5in]{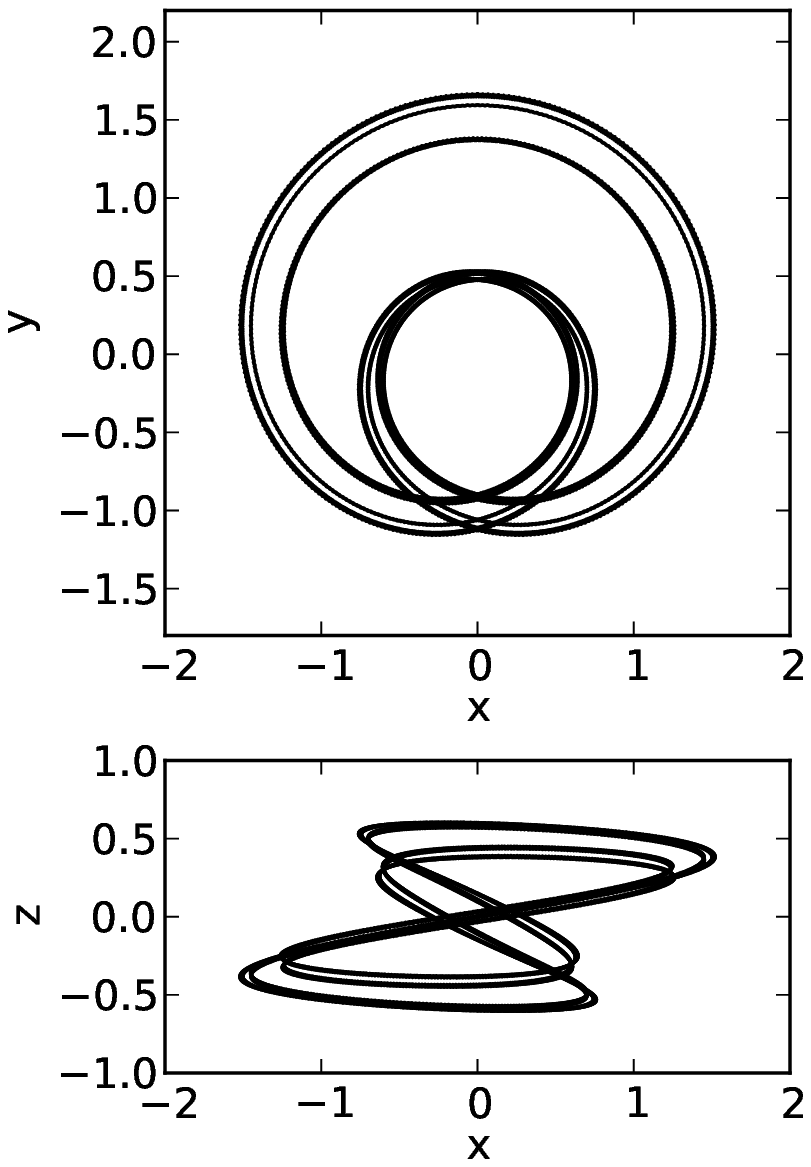}
\includegraphics[width=2.5in]{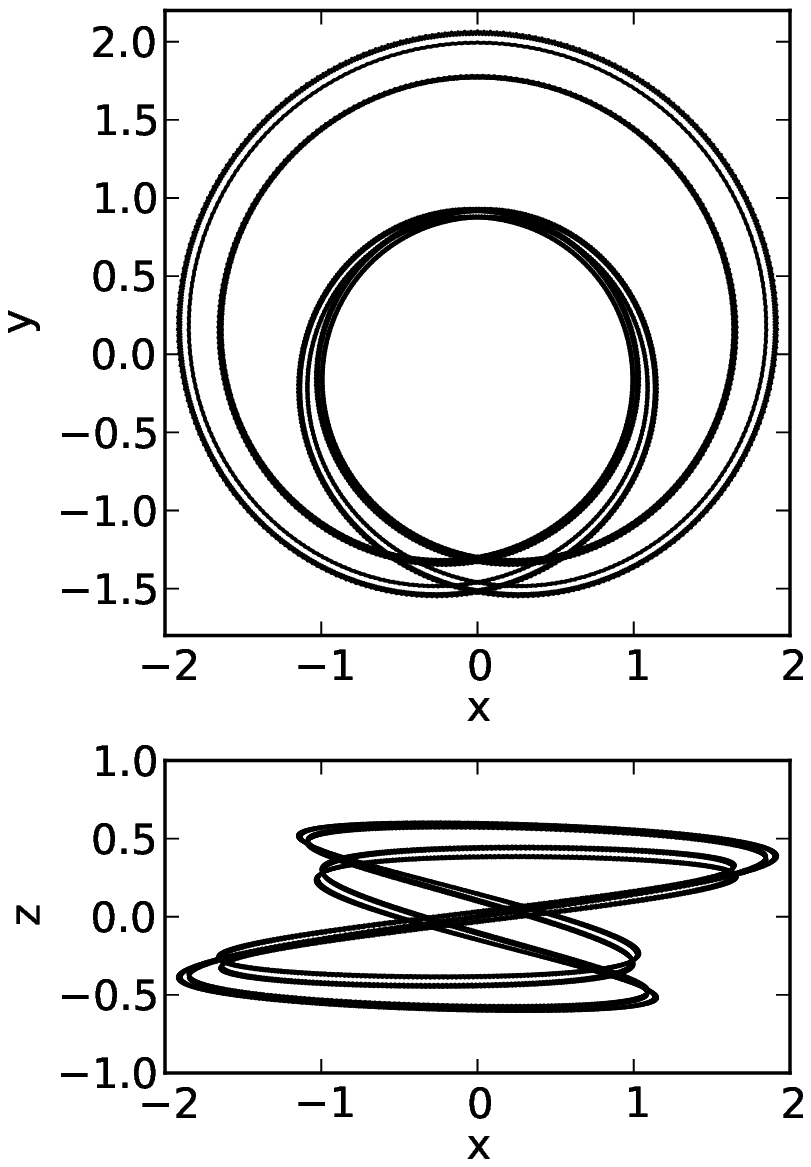}
\caption{Visualization of the Smale-Williams solenoid attractor defined
by the map in Equation (\ref{solenoid}).
The left plots show the attractor at $s=1$.
The right plots show the attractor at $s=1.4$.}
\label{f:attractor}
\end{figure}

\begin{figure}[htb!]\centering
\includegraphics[width=4in]{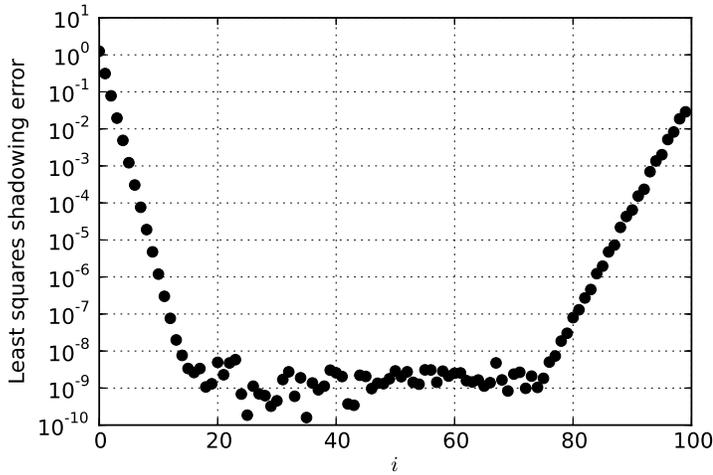}
\caption{The $l^2$ norm of the least squares shadowing error
$e_i^{\{n\}} = v_i^{\{n\}} - v_i^{\{\infty\}}$ for a trajectory of
length $n=100$ at $s=2.0$.}
\label{f:err}
\end{figure}

\begin{figure}[htb!]\centering
\includegraphics[width=4.5in]{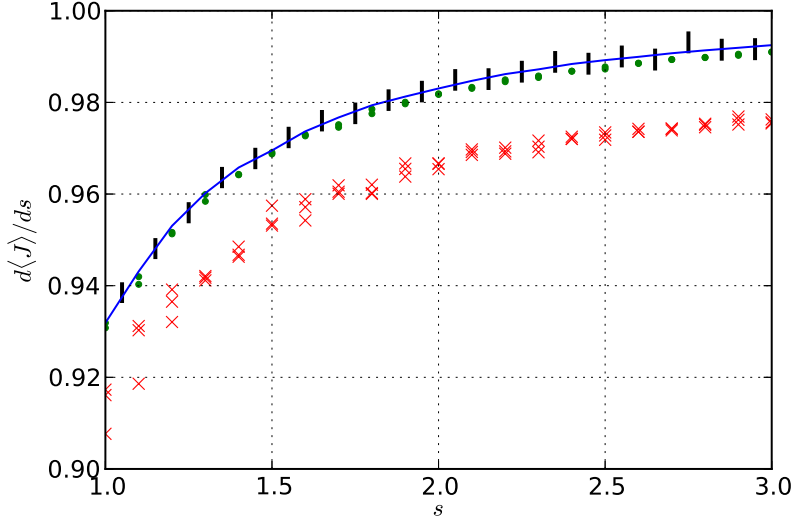}
\caption{
$d\langle J\rangle/ds$ computed using the least squares shadowing algorithm.
Red X's represent those computed with trajectories of length $n=100$.
Green dots represent those computed with $n=1000$.  Blue lines represent
those computed with $n=10000$.  Each calculation is repeated several
times at the same value of $s$.  The black bars represent the $3\sigma$
confidence interval of finite difference derivatives.  Each finite
difference derivative is computed by differencing the mean of
10000 trajectories at $s+0.05$ and the mean of 10000
trajectories at $s-0.05$.  Each of these 20000 trajectories has length 10000.}
\label{f:lss}
\end{figure}

\begin{figure}[htb!]\centering
\includegraphics[width=3.5in]{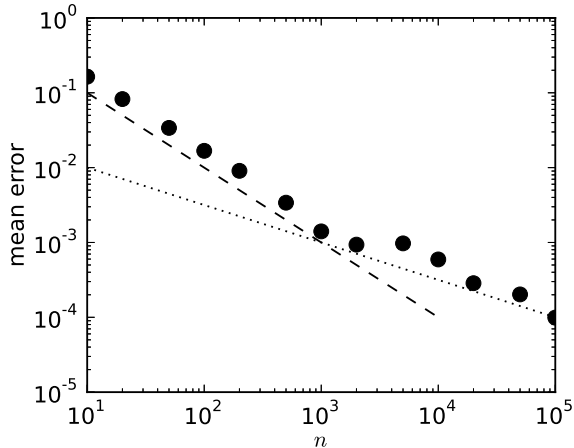}
\caption{The error in the computed derivative
$d\langle J\rangle/ds$ at $s=1$ as a function of the trajectory length $n$.
Each dot is an averaged value of absolute value of the error computed
with 16 random trajectories of the same length.  The dashed line
indicate $O(n^{-1})$ rate of decay; the dotted line indicates
$O(n^{-\frac12})$ rate of decay.}
\label{f:converge}
\end{figure}

The algorithm is tested on the Smale-Williams solenoid attractor.
The map that defines this attractor in cylindrical coordinates is
\footnote{
Although the map is defined on cylindrical coordinates, the $L^2$ norm
in $R^3$ is the Euclidean distance in Cartesian coordinates.
In the numerical implementation of this map,
the Cartesian coordinates of $u_n$ is transformed to cylindrical coordinates,
then the map is applied to obtain $u_{n+1}$ in
cylindrical coordinates before it is transformed back to Cartesian
coordinates.}
\begin{equation} \label{solenoid}
   u_{n+1}
 = \begin{bmatrix}r_{n+1}\\ \theta_{n+1}\\z_{n+1}\end{bmatrix}
 = \begin{bmatrix}
 s + (r_{n} - s)/4 + (\cos\theta_n)/2\\
 2 \theta_n\\
 z_n/4 + (\sin\theta_n)/2 \end{bmatrix}\;.
\end{equation}
The map has a single parameter $s$, whose effect is qualitatively shown
in Figure \ref{f:attractor}.
We define the quantity of interest
\[ J(u) = \sqrt{r^2 + z^2}\;, \]
and focus on computing the
derivative of the long time averaged quantity of interest $\langle
J\rangle$ to the parameter $s$.

This particular map is chosen such that the shadowing direction has
a rare analytic form.  It is straightforward to verify that the constant
sequence $v_i^{\{\infty\}} \equiv [r=1, \theta=0, z=0]$
satisfies the tangent map
$v_{i+1}^{\{\infty\}} = (Df(u_i,0)) v_i^{\{\infty\}} +
(\partial_sf(u_i,0)) $ for any sequence $\{u_i\}$.  
This analytic form of the shadowing direction allows us to numerically
evaluate the least squares shadowing error $e_i^{\{n\}}$ as defined in
Equation (\ref{shadowerr}).  Figure \ref{f:err} shows that the error
is order 1 at both the beginning and end of a trajectory, but decreases
exponentially to numerical precision towards the middle portion of the
trajectory.  This trend is consistent with the error bound provided
by Theorem \ref{errbound}.

The values of $d\langle J\rangle/ds$ computed from the least squares
shadowing algorithm is plotted in Figure \ref{f:lss} and compared
against finite difference derivatives.  The derivatives computed on
trajectories of length $n=100$ has significant error because
$e_i^{\{n\}}$ is large on a significant portion of the trajectory.
The derivatives computed with $n=1000$ and $10000$ appear to be at least
as accurate as the finite difference values.  It is worth noting that
each finite difference calculation involves trajectories of total length
$200,000,000$, and takes orders of magnitude longer computation time
than a least squares shadowing calculation with $n=10000$.

Figure \ref{f:converge} illustrates that the least squares
shadowing algorithm converges at a rate of $O(n^{-1})$ at relatively
small values of $n$, then transitions into a rate of $O(n^{-\frac12})$
at higher $n$.
\footnote{
The truth value of $d\langle J\rangle/ds\,|_{s=1}$
used in this convergence analysis is
$0.931450\pm 0.000017$ with $99.7\%$ confidence.  This value is obtained
by averaging over 1100 least squares shadowing calculations, each of
length 100000.  The first 20 steps and the last 20 steps of each
trajectory is removed from the averaging in order to remove the bias caused by
$E_1$.  The value of 20 is motivated by Figure \ref{f:err}.
}
This behavior is consistent with the error analysis
in Section \ref{s:algo}.  For small $n$, $E_1$ as in Equation (\ref{E1}),
which has a decay rate of $O(n^{-1})$, dominates.  For larger $n$,
$E_2$ as in Equation (\ref{E2}), which has a slower decay rate of
$O(n^{-\frac12})$, dominates.  They lead to a two-stage convergence
pattern as seen in Figure \ref{f:converge}.

\section*{Acknowledgment}
The author thanks financial support from AFOSR support under STTR contract
FA9550-12-C-0065 through Dr. Fariba Farhoo, and NASA funding
through technical monitor Dr. Harold Atkins.
The author gratefully acknowledges David Moro and Dr. Si Li for helpful
discussion on the proofs.

\bibliography{master}

\end{document}